\let\proof\@undefined
\let\endproof\@undefined
\newcommand{\BEAS}{\begin{eqnarray*}}
\newcommand{\EEAS}{\end{eqnarray*}}
\newcommand{\BEQ}{\begin{equation}}
\newcommand{\EEQ}{\end{equation}}
\newcommand{\BIT}{\begin{itemize}}
\newcommand{\EIT}{\end{itemize}}
\newcommand{\dist}{\mathop{\bf dist{}}}
\def\defeq{\triangleq}
\newcommand{\probj}{\mathbb{P}}
\newcommand{\real}{{\mathbb{R}}}
\newcommand{\reals}{\real}
\renewcommand{\natural}{{\mathbb{N}}}
\newcommand{\naturals}{\natural}
\newtheorem{theorem}{Theorem}[section]
\newtheorem{remark}[theorem]{Remark}
\newtheorem{definition}[theorem]{Definition}
\newtheorem{assumption}[theorem]{Assumption}
\newcommand{\expectation}[1]{\mbox{$\mathbb{E}\left[#1\right]$}}
\def\S{\mathcal{S}}
\def\A{\mathcal{A}}
\def\R{R}
\def\P{P}
\def\V{V}
\def\Q{Q}
\def\pb{\rho} 
\def\icr{\beta} 
\def\dist{D_\pb} 
\def \df{\alpha} 
\def \epmt{\tau_\epsilon} 
\def \oepmt{\epmt^\star} 
\def\dconst{C_D} 
\def\aconst{C_A} 
\def\ctheta{C_\theta} 
\def \y{y_0} 
\def \hist{\mathcal{H}} 
\newcommand{\BEF}{\begin{proof}}
\newcommand{\EEF}{\end{proof}}
\title{\LARGE \bf
Weighted Difference Approximation of Value Functions \\  for Slow-Discounting Markov Decision Processes
}
\author{Yin-Lam Chow and Junjie Qin
\thanks{Y.-L. Chow and J. Qin are with the Institute for Computational and Mathematical Engineering, Stanford University, Stanford, CA 94305, USA. Email: {\tt \{ychow, jqin\}@stanford.edu}.}}
\begin{document}

\maketitle
\thispagestyle{empty}
\pagestyle{empty}

\begin{abstract}
Modern applications of the theory of Markov Decision Processes (MDPs) often require frequent decision making, that is, taking an action every microsecond, second, or minute. Infinite horizon discount reward formulation is still relevant for a large portion of these applications, because actual time span of these problems can be months or years, during which discounting factors due to e.g. interest rates are of practical concern. In this paper, we show that, for such MDPs with discount rate $\alpha$ close to $1$, under a common ergodicity assumption, a weighted difference between two successive value function estimates obtained from the classical value iteration (VI) is a better approximation than the value function obtained directly from VI. Rigorous error bounds are established  which in turn show that the approximation converges to the actual value function in a rate $(\alpha \beta)^k$ with $\beta<1$. This indicates a geometric convergence even if discount factor $\alpha \to 1$. Furthermore, we explicitly link the convergence speed to the system behaviors of the MDP using the notion of $\epsilon-$mixing time and extend our result to Q-functions. Numerical experiments are conducted to demonstrate the convergence properties of the proposed approximation scheme.
%
\end{abstract}
%

\addtolength{\topskip}{-.0in}
\addtolength{\abovedisplayskip}{-.06in}
\addtolength{\belowdisplayskip}{-.06in}
\addtolength{\parskip}{-0.02in}
\addtolength{\belowcaptionskip}{-0.1in}
\section{Introduction}
A large number of practical problems that involved with decision making under uncertainty can be modeled as Markov Decision Problems (MDPs). Among them, many with relatively long planning horizons are suitably casted as infinite horizon MDPs, with either discounted reward or average reward criteria \cite{puterman_05}. While discounted reward formulation features easier-to-implement computational methods such as value iteration, in cases where the discount factor is very close to $1$, it is known that the convergence for the discounted reward value iteration can be unacceptably slow. This occurs for example in communication network and computer systems applications where decisions have to be made frequently. The average reward criteria, together with their theoretical analysis and algorithmic development, were in part motivated by these observations. However, for these slow-discounting problems, the approach of first modeling the problem approximately as an average reward MDP and then solving it with corresponding algorithms (cf. Chapter~5 of \cite{bertsekas_12} for more details) may give a suboptimal policy with respect to the original discounted reward criteria. 

This paper provides a scheme for approximating value functions of slow-discounting MDPs. The approximation is in the form of a weighted difference between two successive value function estimates obtained from the classical VI.
In particular, building from theories connecting the average reward criteria and discounted reward criteria, we demonstrate that the approximation has a geometric convergence with an error bound of the order $(\df \beta)^k$ which approaches zero even when $\df\to 1$, where $\beta <1$ under a common ergodicity assumption and $k$ is the iteration count for VI. The rate parameter $\beta$ is then characterized with the well-understood notion of $\epsilon$-mixing time for average reward problems. 

The contributions of this paper are summarized as follows: 
\begin{itemize}
\item We show that using a weighted difference between two successive iterates, the classical VI algorithm can be made practical even if the discount factor is arbitrarily close to one.
\item We characterize the convergence of such value function approximation and discuss its relation to the notation of $\epsilon-$mixing time. The error bounds for the value function approximation provides novel insights on the discounted Bellman operator for ergodic MDPs, and theoretical backups for learning algorithms which may need to solve slow-discounting MDPs in its iterates\footnote{For example, the polynomial sample complexity bounds for reinforcement learning algorithm proposed in \cite{Kearns2002} will not be meaningful if $\df\to 1$  and if classical VI is used for solving the MDP in each step. }. 
\item We extend the above weighted difference approximation scheme to Q-functions, which is more commonly used in many reinforcement learning algorithms.
\end{itemize}
 
 \subsection{Related Literature}
Several methods have been proposed for solving MDPs with discount factor $\df$ close to $1$. Among them,  splitting methods and relative value iteration (RVI) are well studied. The Gauss-Seidel VI is the most noteworthy example of splitting methods \cite{puterman_05}, which has $(\df\beta^\mathrm{GS})^k$ convergence, where $\df\beta^\mathrm{GS}$ is related to the norm of corresponding splitting matrices. However the $\beta^\mathrm{GS}<1$ term is usually difficult to evaluate in general settings. In Section~\ref{sec:sim}, the performance of our approximation scheme and Gauss-Seidel VI is compared numerically.
The RVI algorithm, proposed by \cite{white1963dynamic} for average reward problems and generalized to discounted reward settings by \cite{macqueen1966modified} and \cite{Odoni1969}, is shown to have a $(\df \beta^\mathrm{RVI})^k$  convergence in \cite{Morton1971}. The convergence is proved in terms of the relative value function, which is the difference between the value of each state and the value of a fixed pre-selected state, and $\beta^\mathrm{RVI}<1$ is the second largest eigenvalue of the transition probability matrix corresponding to the optimal policy.  While both the RVI and our approximation scheme are analyzed under a similar ergodicity assumption, we contrast these two approaches as follows:
\begin{itemize}
\item RVI is constructed to be an algorithm to obtain the relative value function, which provides sufficient information to compute the optimal policy. However,  to get the actual value function, one has to perform one-step policy evaluation after the algorithm converges, which requires solving a large linear system when the number of state is tremendous. Our approximation scheme estimates the value function directly, which is superior to RVI in applications such as hybrid systems where the actual value functions for each subsystem are often needed for comparison.
\item The $\beta^\mathrm{RVI}$ term in the convergence rate of RVI is hard to evaluate ahead of solving the problem since it corresponds to the optimal policy. Our convergence rate can be obtained directly from the problem data beforehand.
\item Our approach is both conceptually and implementation-wise simpler as its major computation is merely the classical VI. 
\end{itemize}

\subsection{Paper Organization}
The rest of the paper is organized as follows. Section \ref{sec:setup} introduces the problem setup and definitions used. The approximation scheme based on weighted difference is provided in Section \ref{sec:VI_conv}, followed by a proof on its error bound. A characterization for the rate parameter $\beta$ is derived in Section \ref{sec:mix_time}, based on a connection to the concept of  $\epsilon-$mixing time. 
The results of a numerical experiment are given in Section \ref{sec:sim}. Finally, this paper concludes with Section \ref{sec:conc}. 

\section{Problem Setup}\label{sec:setup}
Consider an infinite horizon discounted MDP characterized by the quintuple $(\S,\A,\R,\P,\df)$. Here $\S$ and $\A$ are finite sets representing the state space and the action space. For each $(x,a,y)\in \S\times \A \times \S$, $\R_a(x,y)\in [0,\R_{\max}]$ and $\P_a(x,y)\in [0,1]$ are reward and probability of transitioning from state $x$ to state $y$ after taking action $a$, respectively. The discount rate is denoted as $\df\in (0,1)$. In standard MDPs, the agent aims to identify a stationary policy $\mu: \S\rightarrow \A$ that maximizes the expected discounted reward
\[
\mathbb E \left[\sum_{t=1}^\infty \df^t R_{\mu(x_t)} (x_t, x_{t+1}) \right].
\]

Starting from each state $x\in \S$, the $N$-step accumulated discounted reward for policy $\mu$ is defined as
\[
    \V^N_\mu(x) =\mathbb E \left[\sum_{t=1}^N \df^t R_{\mu(x_t)} (x_t, x_{t+1}) \middle| x_0 = x\right].
\]
By Monotone Convergence Theorem, the infinite horizon value function with respect to control policy $\mu$ is given by
\[
\!\V_\mu(x)=\lim_{N\rightarrow\infty}\V^N_\mu(x)=\mathbb E \left[\sum_{t=1}^\infty \df^t R_{\mu(x_t)} (x_t, x_{t+1}) \middle| x_0 = x\right]
\]
and the (optimal) value function is defined by
$    \V^\star (x) \defeq \max_{\mu } \V_\mu(x).$
Similarly, we can define the state-action value function for each state-action pair $(x,a)\in \S\times \A$ and policy $\mu$ as
$\Q_\mu(x,a) = \sum_{y\in \S} \P_a(x,y) ( \R_a(x,y)+\df \V_\mu (y)),$
and the optimal $\Q-$function as
\begin{equation}\label{Bellman_Q}
    \Q^\star (x,a) = \sum_{y\in \S} \P_a(x,y) (\R_a(x,y)+\df \V^\star (y)).
\end{equation}
Note that $\V^\star(x)=\max_{a\in\A}\Q^\star(x,a)$ is the value function that satisfies the Bellman equation: $\V^\star(x)=T[\V^{\star}](x)$, for every $x\in\S$. The Bellman operator for discounted reward function is denoted by $T[\cdot]$, where
\begin{equation}
T[\V](x)\defeq\max_{a\in\A} \sum_{y\in\S} \P_a(x,y)(R_a(x,y)+\df\V(y)).\label{eq:T}
\end{equation}
for $\df\in(0,1)$, and $V:\S\rightarrow\reals$ is an arbitrary function. We can write expression (\ref{Bellman_Q}) as the Bellman equation of optimal $Q-$function:
$\Q^\star(x,a)=F[\Q^\star](x,a), \quad \forall x\in\S, a\in\A, $
where $F[\cdot]$ is the $Q-$function Bellman operator, defined as
\[
F[\Q](x,a)= \sum_{y\in\S} P_a(x,y)(R_a(x,y)+\df\max_{b\in\A} \Q(y,b)),
\]
for $\df\in(0,1)$, and $Q:\S\times\A\rightarrow\reals$ is an arbitrary function. Furthermore, let $\mu_\Q$ be a policy which satisfies
$    \mu_\Q \in \arg\max_{a\in \A} \Q(x,a).$

Ergodicity assumptions are widely used in the analysis of stochastic optimal control and reinforcement learning \cite{Kearns2002, Brafman2003}. Motivated by identical assumptions made in the analysis of the relative value iteration algorithm for average reward MDPs (cf. Proposition 5.3.2 in \cite{bertsekas_12}), we give a more quantitative characterization of the ergodicity assumption.
\begin{assumption}\label{assume_ergodic}
For any admissible policy $\pi=\{\mu_0,\mu_1,\ldots,\mu_{\dist-1}\}$ and initial state $x\in \S$, there exist $\pb \in (0,1)$, $\dist >0$ and $\y\in \S$ such that
\begin{equation}
\P_{\pi}(x_{0}=x,x_{\dist}=\y)\defeq
[\P_{a_0}P_{a_1}\ldots \P_{a_{\dist-1}}]_{x\y}\geq \pb, \label{Q_assume}
\end{equation}
where $a_k = \mu_k(x_k)$, $ k = 0,\dots,  \dist-1$. 
\end{assumption}
\section{Weighted Difference Approximation and Its Convergence Properties }\label{sec:VI_conv}
It is well known that there are some intrinsic relationships between maximum average reward and maximum discounted reward MDPs. As discussed in \cite{tsitsiklis2002}, for any admissible control policies, an average reward can be viewed as an orthogonal projection of the discounted reward where the relative value function is a $(1-\df)$ multiple of the residual vector. Furthermore, from Theorem 1 in \cite{kakade2001}, when $\df\rightarrow 1$, the discounted reward can be approximated by maximum average reward.  However, this approximation is valid only when $\df\rightarrow 1$. Also this approach has a major drawback, as finding the optimal control policies (Blackwell optimal control policies) for discounted reward MDPs is usually computationally expensive (cf. Chapter 10 of \cite{puterman_05} for more details). Motivated by these observations, and under Assumption \ref{assume_ergodic}, this section develops a new value function approximation for discounted reward MDPs using weighted difference methods, which also arises in average reward value iteration. We also show that the error bound of this algorithm is geometric and is always smaller than the classical value iteration.

For any specific $z\in \S$,  define the ``gain" $\lambda^\ast$ and the ``bias" $h^\star$ for discounted reward MDPs:
\[
\begin{split}
&h^\star(x)=\V^\star(x)-\V^\star(z),\,\,  \lambda^\star=(1-\df)\V^\star(z).
\end{split}
\]
By Fixed Point theorem: $T[\V^\star](x)=\V^\star(x)$, we have the following identity:
\[
\lambda^\star+h^\star(x)=T[h^\star](x).
\]
This is analogous to the Fixed Point theorem for average reward uni-chain MDPs. Now, we define 
\begin{equation}\label{beta_defn}
\icr=(1-\pb)^{1/\dist}\in(0,1).
\end{equation}
This term can be viewed as an improved discounted factor, and it is well defined, based on the ergodicity assumption (Assumption \ref{assume_ergodic}). More discussions about $\icr$ will be given in the next section.

Now, define the weighted difference value function approximation scheme:
\begin{quote} {\bf $\mathcal{WDVF}$ Approximation Scheme} --- Given an initial value function estimate $V_0:\S\rightarrow\reals$, and a discounted factor $\df\in(0,1)$, for $k\in\{1,2,\ldots\}$, estimate the $(k+1)^{\text{th}}-$step value function as follows:
\begin{equation}\label{val_fn}
\!\!\V_{k+1}(x)\!\!=\!\!\frac{T^{k+1}[\V_0](x)\!\!-\!\!\df T^k[\V_0](x)}{1-\df}, \forall x\in\S.\!\!
\end{equation}\end{quote}

Different from the classical value iteration (which estimates the value function as $T^k[V^0]$ at the $(k+1)^{\text{th}}$ step), the $\mathcal{WDVF}$ approximation uses a normalized one-step difference: $(T^{k+1}[\V_0](x)-\df T^k[\V_0](x))/(1-\df)$ in each updates. If we represent the $k^{\text{th}}-$step value function estimate in classical value iteration by 
\[
\overline{\V}_{k}(x)=T^k[\V_0](x),
\]
the $(k+1)^{\text{th}}-$step $\mathcal{WDVF}$ approximation is equivalent to 
\[
\V_{k+1}(x)=\frac{\overline{\V}_{k+1}(x)-\df\overline{\V}_{k}(x)}{1-\df},\,\, \forall x\in\S,\,\, \alpha\in(0,1).
\]
It is obvious that for any $\df\in(0,1)$, if $\overline{\V}_{k}(x)\rightarrow\overline{\V}_\infty(x)=\V^\star(x)$, then $\V_{k+1}(x)\rightarrow\V^\star(x)$, for any $x\in\S$. In the next theorem, we will show that the error bound of $\mathcal{WDVF}$ approximation converges faster than the error bound of the classical value iteration. Before getting into the details, define the following constant:
\begin{equation}
\dconst=\max_{\ell\in\{0,1,\ldots,\dist-1\}}\frac{\|T^\ell[\V_0]-T^\ell[h^\star]\|_d}{(\df\icr)^\ell}>0\label{dconst_defn}
\end{equation}
where $\|\V\|_d=\max_{x\in \S}\V(x)-\min_{x\in \S}\V(x).\footnote{The $\|\cdot\|_d$ notation is identical to the span-semi norm notation in equation (6.6.3) in \cite{puterman_05}.}$
This constant will characterize the leading coefficient of the error bound in $\mathcal{WDVF}$ algorithm for discounted reward problems, whose explicit formulation is provided in the following theorem.
\begin{theorem}\label{lem_VI_dis}
For $k\in\mathbb{Z}^+$ and any $x\in\S$, let $V_{k+1}(x)$ be the $(k+1)^{\text{th}}-$step $\mathcal{WDVF}$ approximation obtained from equation (\ref{val_fn}). This value function approximation has the following error bound in $\|\cdot\|_d$ semi-norm:
\begin{equation}
\|\V_{k+1}-\V^\star\|_{d}\leq  \frac{\df(1+\icr)(\df\icr)^{ k}}{1-\df} \dconst\label{V_bdd_1}
\end{equation}
and the following error bound for any $x\in\S$:
\begin{equation}
-2\dconst\frac{(\df\icr)^{k}}{1-\df}\leq \V_{k+1}(x)-\V^\star(x)\leq 2\dconst\frac{(\df\icr)^{k}}{1-\df}.\label{V_bdd_2}
\end{equation}
Furthermore, let $c(x)=T^{k-1}[\V_0]-T^k[\V_0]$. Then,
\begin{equation}\label{expression_difference_V}
\frac{\df(c(x)-\|c\|_\infty)}{1-\df}\!\!\leq\!\!\V_{k+1}(x)\!\!-\!\!\V_k(x)\!\!\leq\!\!\frac{\df(c(x)+\|c\|_\infty)}{1-\df}.
\end{equation}
\end{theorem}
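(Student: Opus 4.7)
The strategy is to identify $V^\star$ as the exact output of the weighted--difference formula (\ref{val_fn}) when initialized at the bias $h^\star$, and then to bound the error through a span semi--norm contraction of $T^\dist$ that follows from Assumption~\ref{assume_ergodic}.

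First I would use the affine--shift property $T[V+c\mathbf{1}]=T[V]+\df\,c\mathbf{1}$, valid for any scalar $c$ by inspection of (\ref{eq:T}), together with the given identity $T[h^\star]=\lambda^\star+h^\star$, to prove by induction
\[
T^k[h^\star]=h^\star+\lambda^\star\,\frac{1-\df^k}{1-\df}.
\]
Plugging this into (\ref{val_fn}) yields $(T^{k+1}[h^\star]-\df\,T^k[h^\star])/(1-\df)=h^\star+\lambda^\star/(1-\df)=V^\star$, \emph{identically} in $k$. Subtracting from (\ref{val_fn}) then gives the error decomposition
\[
V_{k+1}-V^\star=\frac{\bigl(T^{k+1}[V_0]-T^{k+1}[h^\star]\bigr)-\df\bigl(T^k[V_0]-T^k[h^\star]\bigr)}{1-\df},
\]
which reduces the theorem to controlling $T^k[V_0]-T^k[h^\star]$ in an appropriate norm.

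The main obstacle is establishing the $\dist$--step span contraction
\[
\|T^\dist[V]-T^\dist[W]\|_d \leq \df^\dist(1-\pb)\,\|V-W\|_d=(\df\icr)^\dist\,\|V-W\|_d.
\]
The plan is to iterate, $\dist$ times, the pointwise sandwich
\[
\df\sum_y P_{\mu_W(x)}(x,y)\bigl(V(y)-W(y)\bigr)\leq T[V](x)-T[W](x)\leq \df\sum_y P_{\mu_V(x)}(x,y)\bigl(V(y)-W(y)\bigr),
\]
where $\mu_V,\mu_W$ are greedy policies for $V$ and $W$. This expresses $T^\dist[V](x)-T^\dist[W](x)$ as $\df^\dist$ times a $\dist$--step transition of an admissible non--stationary policy applied to $V-W$. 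Assumption~\ref{assume_ergodic} forces that distribution to place mass at least $\pb$ on $\y$; decomposing it as $\pb\,\delta_{\y}+(1-\pb)(\cdot)$ then cancels the $\pb$--piece when we form $\max_x-\min_x$, leaving exactly the factor $1-\pb=\icr^\dist$. Writing a general index as $k=j\dist+\ell$ with $0\leq\ell<\dist$, iterating this $\dist$--step contraction $j$ times and absorbing the remaining $\ell$ iterations into $\dconst$ via its definition (\ref{dconst_defn}) produces
\[
\|T^k[V_0]-T^k[h^\star]\|_d \leq (\df\icr)^k\,\dconst.
\]

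The three claims now fall out. For (\ref{V_bdd_1}) I would apply the triangle inequality for $\|\cdot\|_d$ to the error decomposition; the two terms contribute $(\df\icr)^{k+1}\dconst$ and $\df(\df\icr)^k\dconst$, which combine to $\df(1+\icr)(\df\icr)^k\dconst$ in the numerator. For (\ref{V_bdd_2}) I would rewrite the numerator instead, via $T[V^\star]=V^\star$, as $(T^{k+1}[V_0]-V^\star)-\df(T^k[V_0]-V^\star)$, and apply the shift--type inequality $\df\min e_k\leq (T^{k+1}[V_0]-V^\star)(x)\leq \df\max e_k$ to $e_k:=T^k[V_0]-V^\star$; since $e_k$ and $T^k[V_0]-T^k[h^\star]$ differ only by the constant $\df^k V^\star(z)$ (from Step~1), $\|e_k\|_d\leq(\df\icr)^k\dconst$ and hence $|V_{k+1}(x)-V^\star(x)|\leq \df\,\|e_k\|_d/(1-\df)$, which is already stronger than the stated bound. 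Finally, (\ref{expression_difference_V}) is a direct calculation: expanding the two successive iterates gives $V_{k+1}(x)-V_k(x)=\bigl[(T^{k+1}[V_0](x)-T^k[V_0](x))+\df\,c(x)\bigr]/(1-\df)$, and the same shift--type inequality applied to $T[T^k[V_0]]-T[T^{k-1}[V_0]]$ places the first summand in $\df[-\|c\|_\infty,\|c\|_\infty]$.
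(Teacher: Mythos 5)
Your proposal is correct, and its skeleton coincides with the paper's: the same two-policy sandwich plus Assumption~\ref{assume_ergodic} yields the $\dist$-step span contraction $\|T^{\dist}[V]-T^{\dist}[W]\|_d\leq(\df\icr)^{\dist}\|V-W\|_d$, the same identity $T^k[h^\star]=h^\star+\lambda^\star(1-\df^k)/(1-\df)$ combined with the division $k=j\dist+\ell$ gives $\|T^k[\V_0]-T^k[h^\star]\|_d\leq\dconst(\df\icr)^k$, and the triangle inequality and the expansion of $\V_{k+1}-\V_k$ deliver (\ref{V_bdd_1}) and (\ref{expression_difference_V}) essentially verbatim as in the appendix. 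Where you genuinely diverge is (\ref{V_bdd_2}): the paper introduces the gain/bias iterates $h_k(x)=T^k[\V_0](x)-T^k[\V_0](z)$ and $\lambda_k$, bounds $\|h_k-h^\star\|_\infty\leq\dconst(\df\icr)^k$, and works through a chain of substitutions to reach the factor $2$; you instead exploit the fact that $\V^\star$ is reproduced exactly by the weighted-difference formula (via $T[\V^\star]=\V^\star$, or equivalently your observation that initializing (\ref{val_fn}) at $h^\star$ returns $\V^\star$ identically in $k$) and apply the min/max sandwich directly to $e_k=T^k[\V_0]-\V^\star$, whose span equals $\|T^k[\V_0]-T^k[h^\star]\|_d$ because the two differ only by the constant $\df^k\V^\star(z)$. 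This is shorter and yields the sharper constant $\df$ in place of $2$, i.e.\ $|\V_{k+1}(x)-\V^\star(x)|\leq\df\dconst(\df\icr)^k/(1-\df)$, which implies the stated bound. The one thing the paper's longer detour buys is that the intermediate inequalities (\ref{rhs_h})--(\ref{lhs_h}) and the quantity $\|h_k-h^\star\|_\infty$ are reused verbatim in the proofs of Theorem~\ref{char_beta} and the $Q$-function error bound in Section~\ref{sec:modified_Q_itr}, so if you adopt your streamlined argument you would need to re-derive those two displays separately for the later results.
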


\BEF
See appendix.
\end{proof}
\begin{remark}
The difference between any two successive value function estimates in the $\mathcal{WDVF}$ approximation scheme is bounded. However, the sequence of value function is not monotonically increasing/decreasing. 
\end{remark}
\begin{remark}
Similar to the relative value iteration algorithm in Section 6.6.4 in \cite{puterman_05} and in \cite{macqueen1966modified} (which is namely the modified dynamic programming algorithm), the $\mathcal{WDVF}$ approximation is based on the normalized differences between value functions. Thus, these two methods share similar semi-norm convergence rates (the definition of $\gamma$ in Theorem 6.6.6 in \cite{puterman_05} is identical to $\icr$, when $\dist=1$). Nevertheless, our proposed algorithm also has a convergence rate of $(\df\icr)^k$ in sup-norm, while up to the authors' knowledge, no such analysis exists for the relative value iteration algorithm.
\end{remark}

\section{The connection with $\epsilon-$mixing Time}\label{sec:mix_time}
In the previous section, we characterize the error bound of the $\mathcal{WDVF}$ approximation scheme in terms of $\dconst$, $\df$ and $\icr$, where $\dconst$ depends on $\df$, $\icr$ and the value function. The intuition behind the discounted factor $\df$ is very clear. However, based on equation (\ref{beta_defn}), we only know that $\icr$ is related to the ergodicity of a Markov decision process (cf. Section~3 of \cite{morton1977discounting} for details). Its explicit meaning is not well understood. In order to understand the meaning behind $\icr$, it is natural to study the notion of ``$\epsilon-$mixing time" in average reward MDPs. Although we will formally define this notion later, $\epsilon-$mixing time can be viewed as a metric that measures the ``ergodic strength" (the convergence speed of sample average reward function to relative reward function) of average reward MDPs. Intuitively $\epsilon-$mixing time and $\icr$ describe similar features in a Markov decision process.

In this section, we will formulate a relationship between $\icr$ and the $\epsilon-$mixing time. This in turn establishes a connection between the error bound of the $\mathcal{WDVF}$ approximation scheme and $\epsilon-$mixing time.

First, define the Bellman operator for an un-discounted reward function, similar to the case of average reward MDP:
\[
\overline{T}[h](x)=\max_{a\in\A} \sum_{y\in \S}\P_{a}(x,y)(h(x)+R_{a}(x,y)),\quad \forall x\in\S.
\]
Also, define $\Pi$ to be the set of sequence of general admissible policies. The average reward MDP is given by $\max_{\pi\in\Pi} J_\pi(x_0)$,
where
\begin{equation}
J_\pi(x_0)=\lim\sup_{N\rightarrow\infty}\frac{1}{N}\mathbb E \Bigg[\sum_{t=1}^N  R_{\mu_t(x_t)} (x_t, x_{t+1}) \Bigg].
\end{equation}
and $\pi=\{\mu_0,\mu_1,\ldots\}$. From Proposition 5.1.1 and 5.1.2 in \cite{bertsekas_12}, the $``\limsup"$ can be replaced by $``\lim"$ if we restrict $\Pi$ to be the set of stationary admissible policies, i.e., $\pi=\{\mu,\mu,\ldots\}$.

From Section 5.1.3, Proposition 5.1.8 in \cite{bertsekas_12}, for average reward MDP, suppose the relative reward  $\lambda^\star:\S\rightarrow\reals$ and the bias reward $h^\star:\S\rightarrow\reals$ satisfy the following pair of optimality equations:
\begin{subequations}\label{opt_avg_cond_1}
\begin{align}
\lambda^\star(x)=&\max_{a\in\A}\sum_{y\in S} \P_a(x,y)\lambda^\star(y),\\
\!\!\!\!  \lambda^\star(x)\!+\!h^\star(x)\!=\!&\max_{a\in\overline\A}\sum_{y\in S} \P_a(x,y)(R_a(x,y)\!\!+\!\!h^\star(y))\!\!\!
\end{align}
\end{subequations}
where $\overline\A$ is the set of control actions that maximizes the first optimization problem. Then, $\mu^\star$, which attains the maximum of these two expressions simultaneously, is the stationary optimal control policy of the average reward MDP. Furthermore, the following expression holds for any $N^\prime\in\naturals$.
\begin{align}
&\frac{1}{N^\prime}\expectation{\sum_{k=0}^{N^\prime-1}R_{\mu^\star(x_{k})}(x_k,x_{k+1})+h^\star(x_{N^\prime})\mid x_0=x,\mu^\star} \nonumber\\
&-\lambda^\star(x)=h^\star(x)/N^\prime,\quad \forall x\in\S.\label{diff_avg}
\end{align}
Thus, with $h^\star(x)$ being a finite real valued bias function obtained from expression (\ref{opt_avg_cond_1}), by letting $N^\prime\rightarrow\infty$, we can show that $\lambda^\star(x)$ is the optimal average reward:
\[
\lambda^\star(x)=\lim_{N^\prime\rightarrow\infty}\frac{1}{N^\prime}\expectation{\sum_{k=0}^{N^\prime-1}R_{\mu^\star(x_{k})}(x_k,x_{k+1})\mid x_0=x,\mu^\star}.
\]

Consider a stationary policy $\mu$ where the Markov chain induced by $\mu$ only has one recurrent class. We call such stationary policy a uni-chain policy. By proposition 5.2.5 in \cite{bertsekas_12}, if all admissible stationary policies are uni-chain, Assumption \ref{assume_ergodic} holds with $\mu_k=\mu$, for any $k\in\naturals$. Proposition 5.2.3 in \cite{bertsekas_12} implies that the gain $\lambda^\star(x)$ is the same for all states. Then, the first equation in expression (\ref{opt_avg_cond_1}) holds trivially and $\overline \A =\A$. Thus the stationary optimal policy $\mu^\star$ can be found by the following expression:
\[
\mu^\star(x)\in\arg\max_{a\in\A}\sum_{y\in S} \P_a(x,y)(R_a(x,y)+h^\star(x))
\]
and $\lambda^\star$ is the optimal average reward that satisfies the fixed point theorem for average reward MDP:
\[
\lambda^\star+h^\star(x)=\overline{T}[h^\star](x),\quad \forall x\in\S.
\]

Next, the notion of $\epsilon-$mixing time in a MDP is discussed. The standard notion of mixing time of a stationary control policy $\mu$ quantifies the smallest number $N$ of steps required to ensure that the distribution on states after $N$ steps is within $\epsilon$ of the stationary distribution induced by $\mu$. The distance between these distributions is measured by the Kullback-Leibler divergence, the variation distance, or some other standard metrics. There are well-known methods for bounding this mixing time in terms of the second eigenvalue of the transition probability matrix $P$, using underlying structural properties such as ``conductance". Similar to Definition 5 in \cite{Kearns2002}, it turns out that we can state our results for a weaker notion of mixing time that only requires the expected discounted reward after $N$ steps, induced by the stationary optimal control policy to approach an asymptotic reward.
\begin{definition}\label{defn_mix_time}
The $\epsilon-$mixing time  of any stationary optimal control policy, $\mu^\star\in\arg\max_{\mu} V_\mu(x)$, is the smallest constant $\oepmt$ such that for all $N^\prime\geq \oepmt$ and all $x\in\S$,
\begin{equation}
\!\!\!\!\!\!\left|\frac{1}{N^\prime}\expectation{\sum_{k=0}^{N^\prime-1}R_{\mu^\star(x_{k})}(x_k,x_{k+1})\!\!\mid\!\! x_0=x,\mu^\star}\!\!-\!\!\lambda^\star\right|\leq \epsilon.\!\!\label{mixed_cond}
\end{equation}
\end{definition}

Before getting to the main result of this section, we define
\begin{equation}
\aconst=\max_{\ell\in\{0,1,\ldots,\dist-1\}}\frac{\|\overline{T}^\ell[\V_0]-\overline{T}^\ell[h^\star]\|_d}{(1-\pb)^{\ell/\dist}}>0.\label{aconst_defn}
\end{equation}
Similar to the definition of $\dconst>0$, this coefficient will characterize the constant term of an upper bound for average reward problems. The next theorem provides this upper bound in terms of the time horizon $N^\prime$, $\aconst>0$ and $\icr>0$. Also, it gives an expression between $\epsilon-$mixing time and constant $\icr\in(0,1)$.
\begin{theorem}\label{char_beta}
Let $\V_0(x)=0$ for any $x\in\S$. Then, for any $x\in\S$, and for any $N^\prime\geq 1$, there exists a constant $\aconst>0$ such that
\begin{equation}
\begin{split}
&\left|\frac{1}{N^\prime}\expectation{\sum_{k=0}^{N^\prime-1}R_{\mu^\star(x_{k})}(x_k,x_{k+1})\mid x_0=x}-\lambda^\star\right|\\
\leq&\frac{2\aconst}{N^\prime}\frac{\icr}{1-\icr},\quad \forall x\in\S. \label{avg_reward_1}
\end{split}
\end{equation}
where 
\[
\mu^\star(x)\in\arg\max_{a\in\A}\sum_{y\in \S}\P_{a}(x,y)(h(x)+R_{a}(x,y)).
\]
Furthermore, this implies 
\[
\icr\geq\epsilon \oepmt/(2 \aconst+\epsilon \oepmt),
\]
where $\oepmt$ is the $\epsilon-$mixing time in Definition \ref{defn_mix_time}.\footnote{Proof of this result is omitted in this conference version and  can be found at  \url{web.stanford.edu/~ychow}.}
\end{theorem}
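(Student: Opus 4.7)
\emph{Proof Plan.} The plan is to reduce (\ref{avg_reward_1}) to a telescoping sum of geometrically decaying terms governed by the span-seminorm contraction of $P_{\mu^\star}^{\dist}$ that Assumption \ref{assume_ergodic} guarantees, and then to solve the resulting inequality algebraically for $\icr$.

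First, starting from identity (\ref{diff_avg}), I would rewrite the left-hand side of (\ref{avg_reward_1}) as
\[
\frac{1}{N'}\expectation{\sum_{k=0}^{N'-1}R_{\mu^\star(x_k)}(x_k,x_{k+1})\mid x_0=x}-\lambda^\star \;=\; \frac{h^\star(x)-P_{\mu^\star}^{N'}h^\star(x)}{N'},
\]
and telescope the numerator as $h^\star(x)-P_{\mu^\star}^{N'}h^\star(x)=\sum_{k=0}^{N'-1}P_{\mu^\star}^{k}g(x)$, where $g:=h^\star-P_{\mu^\star}h^\star=R_{\mu^\star}-\lambda^\star$. Because $g$ has zero mean under the stationary distribution of $P_{\mu^\star}$, so does each $P_{\mu^\star}^{k}g$, and therefore $|P_{\mu^\star}^{k}g(x)|\le\|P_{\mu^\star}^{k}g\|_d$ (a zero-mean function is bounded in sup-norm by its span-seminorm).

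Second, Assumption \ref{assume_ergodic} specialised to $\mu^\star$ is exactly a Doeblin minorisation for $P_{\mu^\star}^{\dist}$ with coefficient $\pb$, which by the standard coupling argument yields $\|P_{\mu^\star}^{\dist}f\|_d\le(1-\pb)\|f\|_d$. Iterating over $\dist$-step blocks and absorbing the first $\dist$ iterates into the constant $\aconst$ from (\ref{aconst_defn}) gives $\|P_{\mu^\star}^{k}g\|_d\le 2\aconst\,\icr^{k+1}$ for every $k\ge 0$. Summing the geometric series then produces $|h^\star(x)-P_{\mu^\star}^{N'}h^\star(x)|\le 2\aconst\icr/(1-\icr)$, and dividing by $N'$ establishes (\ref{avg_reward_1}). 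For the second half of the theorem, set $N'=\oepmt$: Definition \ref{defn_mix_time} forces the left-hand side of (\ref{avg_reward_1}) to be at most $\epsilon$ at this value of $N'$, so $\epsilon\le 2\aconst\icr/\bigl((1-\icr)\oepmt\bigr)$; rearranging this as $\epsilon\oepmt(1-\icr)\le 2\aconst\icr$ and solving for $\icr$ produces $\icr\ge\epsilon\oepmt/(2\aconst+\epsilon\oepmt)$, as claimed.

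I expect the main technical obstacle to be in the second step: translating Doeblin's span-seminorm contraction into a pointwise bound on $P_{\mu^\star}^{k}g(x)$ with the correct leading constant $2\aconst\icr/(1-\icr)$. The span-seminorm alone controls only the oscillation of $P_{\mu^\star}^{k}g$, not its absolute value, so the observation that $g$ has zero stationary mean is essential to upgrade the seminorm bound to a sup-norm bound. Careful bookkeeping of the first $\dist$ iterates, which is precisely what the definition of $\aconst$ in (\ref{aconst_defn}) is engineered to handle, is what forces the explicit prefactor $2\aconst\icr$ rather than a cleaner $\aconst$.
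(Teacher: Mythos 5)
Your route to inequality (\ref{avg_reward_1}) is genuinely different from the paper's and the first half is essentially sound. The paper never passes to the fixed-policy transition matrix: it runs undiscounted value iteration from $\V_0=0$, defines $h_k(x)=\overline{T}^k[\V_0](x)-\overline{T}^k[\V_0](z)$ and $\lambda_k(x)=\overline{T}^k[\V_0](x)-\overline{T}^{k-1}[\V_0](x)$, establishes the $\dist$-step span contraction for the nonlinear operator $\overline{T}$ itself, deduces $\|h_k-h^\star\|_\infty\le\aconst\icr^k$ and $\|\lambda_k-\lambda^\star\|_\infty\le 2\|h_k-h^\star\|_\infty\le 2\aconst\icr^k$, and then telescopes $\overline{T}^{N'}[\V_0](x)-\V_0(x)$ as a sum of the $\lambda_k$'s. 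You instead telescope $h^\star-P_{\mu^\star}^{N'}h^\star$ through the Poisson equation and upgrade the span bound to a pointwise bound via the zero-stationary-mean observation; the paper gets pointwise bounds more directly because $h_k(z)=h^\star(z)=0$ at the reference state. Both arguments rest on the same Doeblin-type span contraction and the same geometric series, so the approaches are comparable in strength; yours is arguably cleaner for a fixed stationary policy, while the paper's operator-level argument is what generalizes to the nonlinear $\overline{T}$. One bookkeeping caveat: your iterates are $P_{\mu^\star}^{k}g$, whereas $\aconst$ in (\ref{aconst_defn}) is built from $\|\overline{T}^\ell[\V_0]-\overline{T}^\ell[h^\star]\|_d$, so your argument produces \emph{a} constant (enough for the ``there exists $\aconst>0$'' phrasing) but not literally the $\aconst$ of (\ref{aconst_defn}), and the intermediate claim $\|P_{\mu^\star}^{k}g\|_d\le 2\aconst\icr^{k+1}$ is asserted rather than derived.

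The second half contains a genuine logical gap. You set $N'=\oepmt$, observe that Definition \ref{defn_mix_time} forces the left-hand side of (\ref{avg_reward_1}) to be at most $\epsilon$ there, and conclude $\epsilon\le 2\aconst\icr/\bigl((1-\icr)\oepmt\bigr)$. That inference is backwards: from $\mathrm{LHS}\le\epsilon$ and $\mathrm{LHS}\le 2\aconst\icr/\bigl((1-\icr)\oepmt\bigr)$ nothing follows about the relative order of $\epsilon$ and the second bound. The correct step uses the \emph{minimality} of $\oepmt$: inequality (\ref{avg_reward_1}) shows that every $N'\ge N_0\defeq 2\aconst\icr/(\epsilon(1-\icr))$ satisfies the $\epsilon$-mixing condition, hence $\oepmt\le N_0$ because $\oepmt$ is the smallest constant with that property; rearranging $\epsilon(1-\icr)\oepmt\le 2\aconst\icr$ then yields $\icr\ge\epsilon\oepmt/(2\aconst+\epsilon\oepmt)$. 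This is precisely how the paper argues, and your final algebraic rearrangement coincides with it, so the repair is local---but as written the step does not follow.
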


\begin{proof}
For any specific $z\in \S$ and $k\in\{1,2,\ldots\}$, define:
\[
\begin{split}
&h_k(x)=\overline{T}^k[\V_0](x)-\overline{T}^k[\V_0](z), \\
& \lambda_k(x)=\overline{T}^k[\V_0](x)-\overline{T}^{k-1}[\V_0](x),\,\, \forall x\in\S.
\end{split}
\]
This implies that 
\[
\lambda_k(x)+h_{k-1}(x)=\overline{T}[h_{k-1}](x).
\]
Recall $\|\V\|_d=\max_{x\in S}\V(x)-\min_{x\in S}\V(x)$.
Similar to the arguments in Lemma \ref{lem_VI_dis} for discounted reward problems, we can show that
\[
\|\overline{T}^{\dist}[\V^{(1)}]-\overline{T}^{\dist}[\V^{(2)}]\|_d\leq(1-\pb) \|\V^{(1)}-\V^{(2)}\|_d.
\]
We can show by induction, and fixed point theorem of average reward MDPs that 
\[
k\lambda^\star+h^\star(x)=\overline{T}^k[h^\star](x).
\]
Moreover, let $k=q\dist+\ell$, for $\ell=\{0,1,\ldots,\dist-1\}$, where $q$ is the greatest common divisor of $k$ and $\dist$. As in in Lemma \ref{lem_VI_dis}, here we can also show that 
\[
\|\overline{T}^k[\V_0]-\overline{T}^k[h^\star]\|_d\leq (\icr)^{qN}\|\overline{T}^\ell[\V_0]-\overline{T}^\ell[h^\star]\|_d\leq \aconst\icr^k.
\]
Following similar derivations as in Lemma \ref{lem_VI_dis}, the above results further imply that $\|h_k-h^\star\|_\infty\leq \aconst\icr^k$
and 
\[
\begin{split}
\|\overline{T}^k[\V_0]-\overline{T}^{k-1}[\V_0]-\lambda^\star\|_\infty&=\|\lambda_k-\lambda^\star\|_\infty\\
&\leq 2\|h_{k}-h^\star\|_\infty\leq 2\aconst\icr^k.
\end{split}
\]

Furthermore, by a telescoping sum, 
\[\small
\begin{split}
&\left|\frac{\overline{T}^N[\V_0](x)-\V_0(x)}{N}-\lambda^\star\right|\leq\sum_{k=1}^N\frac{\|\overline{T}^k[\V_0]-\overline{T}^{k-1}[\V_0]-\lambda^\star\|_\infty}{N}\\
\leq& \frac{2\aconst}{N}\sum_{k=1}^N\icr^k= \frac{2\aconst}{N^\prime}\frac{\icr(1-\icr^{N^\prime})}{1-\icr}\leq \frac{2\aconst}{N^\prime}\frac{\icr}{1-\icr}
\end{split}
\]
for any $x\in\S$. Since $\V_0(x)=0$ for all $x\in S$, the above result implies expression (\ref{avg_reward_1}). Now, for 
$N_0=2\aconst\icr/(\epsilon(1-\icr))$, one obtains
\[
\left|\frac{\expectation{\sum_{k=0}^{N^\prime-1}R_{\mu^\star(x_{k})}(x_k,x_{k+1})\mid x_0=x}}{N^\prime}-\lambda^\star\right|\leq\epsilon
\]
for any $N^\prime\geq N_0$. Then, based on the definition of $\epsilon-$mixing time in Definition \ref{defn_mix_time}, we conclude that 
$2\aconst\icr/(\epsilon(1-\icr))\geq \oepmt$
and $\icr\geq\epsilon \oepmt/(2 \aconst+\epsilon \oepmt)$.
\end{proof}
Now, we are in position to give a relationship between the number of steps needed for convergence of $\mathcal{WDVF}$ approximation and $\epsilon-$mixing time $\oepmt$. Given a constant $\theta>0$. From Lemma \ref{lem_VI_dis}, the condition $\|\V_k-\V^\star\|_\infty\leq\theta$ holds if
\[
\begin{split}
\frac{2\dconst(\df\icr)^{k-1}}{1-\df}\leq \theta\iff &k\geq \log\left(\frac{\theta(1-\df)}{2\dconst}\right)/\log(\df\icr)+1.\end{split}
\]
From the $\icr$ bound given by Theorem \ref{char_beta}, we know that, if the number of steps  is given by the following expression:
\begin{equation}
\begin{split}
k\geq& \ctheta\defeq\max\left\{\frac{\log\left(\theta(1-\df)/(2\dconst)\right)}{\log\left(\df\epsilon \oepmt/(2 \aconst+\epsilon \oepmt)\right)}+1,1\right\},\label{mixing_time_const}
\end{split}
\end{equation}
where $\oepmt$ is the $\epsilon-$mixing time and $\dconst$, $\aconst$ are given by equations (\ref{dconst_defn}) and (\ref{aconst_defn}) respectively,  then $\|\V_k-\V^\star\|_\infty\leq\theta$ is guaranteed. We summarize this result as follows:
\begin{theorem}\label{steps_vi}
Let $V_k(x)$ be the $k^{\text{th}}$ $\mathcal{WDVF}$ approximation obtained from equation (\ref{val_fn}), for any $x\in\S$. The number of steps required for $\|\V_k-\V^\star\|_\infty\leq\theta$ is at least  $\ctheta$.
\end{theorem}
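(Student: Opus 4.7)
The plan is to chain together the two main pieces already established in the paper: the sup-norm convergence bound from Theorem~\ref{lem_VI_dis} and the ergodicity-to-mixing-time lower bound on $\icr$ from Theorem~\ref{char_beta}. First, I would start from inequality~(\ref{V_bdd_2}), which (after shifting the index from $k+1$ to $k$) yields
\[
\|\V_k-\V^\star\|_\infty \;\le\; \frac{2\dconst(\df\icr)^{k-1}}{1-\df}.
\]
Requiring the right-hand side to be at most $\theta$ and taking logarithms gives the sufficient condition
\[
k \;\ge\; \frac{\log\!\big(\theta(1-\df)/(2\dconst)\big)}{\log(\df\icr)}+1.
\]

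Next, I would invoke the inequality $\icr\ge \epsilon\oepmt/(2\aconst+\epsilon\oepmt)$ supplied by Theorem~\ref{char_beta}. The key sign-tracking step is that both $\df\icr$ and $\df\epsilon\oepmt/(2\aconst+\epsilon\oepmt)$ lie in $(0,1)$, so their logarithms are negative. The inequality $\df\icr\ge \df\epsilon\oepmt/(2\aconst+\epsilon\oepmt)$ therefore translates to $\log(\df\icr)\ge \log(\df\epsilon\oepmt/(2\aconst+\epsilon\oepmt))$ as real numbers (both negative, but the former closer to $0$). When we divide the (typically negative, for $\theta$ small) numerator $\log(\theta(1-\df)/(2\dconst))$ by these denominators, the division by the quantity of larger magnitude produces a smaller value, so replacing $\log(\df\icr)$ in the denominator by $\log(\df\epsilon\oepmt/(2\aconst+\epsilon\oepmt))$ only enlarges the threshold. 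Hence the condition
\[
k \;\ge\; \frac{\log\!\big(\theta(1-\df)/(2\dconst)\big)}{\log\!\big(\df\epsilon\oepmt/(2\aconst+\epsilon\oepmt)\big)}+1
\]
already implies the previous one, and in turn guarantees $\|\V_k-\V^\star\|_\infty\le\theta$.

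Finally, I would take the maximum with $1$ to account for the trivial edge case in which $\theta$ is so large that the logarithmic expression becomes non-positive (for example when $\theta(1-\df)\ge 2\dconst$, so that the initial iterate already satisfies the tolerance). Together these observations yield $k\ge\ctheta$ as the sufficient iteration count, matching the definition in~(\ref{mixing_time_const}).

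The routine calculation is just the exponential-to-log inversion. The only subtle point, and the one I would present most carefully, is the sign handling in step two: because both logarithms are negative, the direction of the inequality flips under division, so it must be verified that substituting the lower bound on $\icr$ produces a \emph{larger} (more conservative) threshold rather than a smaller one. Once that monotonicity is established, the rest is bookkeeping and the result follows immediately from Theorems~\ref{lem_VI_dis} and~\ref{char_beta}.
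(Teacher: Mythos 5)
Your first step is fine and matches the paper's own derivation preceding the theorem: shifting the index in (\ref{V_bdd_2}) gives $\|\V_k-\V^\star\|_\infty\le 2\dconst(\df\icr)^{k-1}/(1-\df)$, and inverting the exponential yields the sufficient condition $k\ge \log\big(\theta(1-\df)/(2\dconst)\big)/\log(\df\icr)+1$. The genuine problem is the second step --- the one you yourself flag as the subtle point --- where the monotonicity is concluded backwards. Write $\icr_{\min}=\epsilon\oepmt/(2\aconst+\epsilon\oepmt)$, so Theorem~\ref{char_beta} gives $\icr\ge\icr_{\min}$, hence $0>\log(\df\icr)\ge\log(\df\icr_{\min})$, i.e.\ $|\log(\df\icr)|\le|\log(\df\icr_{\min})|$. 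Dividing the (negative) numerator $\log\big(\theta(1-\df)/(2\dconst)\big)$ by the denominator of \emph{larger} magnitude produces a \emph{smaller} positive quotient --- you state this intermediate fact correctly --- but you then assert that the threshold is thereby \emph{enlarged}, which is the opposite conclusion. In fact
\[
\frac{\log\big(\theta(1-\df)/(2\dconst)\big)}{\log\big(\df\icr_{\min}\big)}\;\le\;\frac{\log\big(\theta(1-\df)/(2\dconst)\big)}{\log(\df\icr)},
\]
so $\ctheta$ is a \emph{lower} bound on the $\icr$-based sufficient iteration count, and $k\ge\ctheta$ does \emph{not} imply $k\ge \log\big(\theta(1-\df)/(2\dconst)\big)/\log(\df\icr)+1$. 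Your chain therefore does not establish that $k\ge\ctheta$ guarantees $\|\V_k-\V^\star\|_\infty\le\theta$; to obtain a computable \emph{sufficient} stopping rule one would need an upper bound on $\icr$, which Theorem~\ref{char_beta} does not supply.

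To be fair, the paper's own prose before Theorem~\ref{steps_vi} makes the same sufficiency claim and is open to the same objection; but the theorem as actually stated asserts that the number of steps required is ``at least $\ctheta$'', and that is exactly the direction which \emph{does} follow from $\icr\ge\icr_{\min}$: the iteration count certified by the error bound of Theorem~\ref{lem_VI_dis} is bounded below by $\ctheta$. So the repair is not a new idea but a reversal of your final inequality --- carry the division through with the correct sign bookkeeping, conclude $\ctheta\le \log\big(\theta(1-\df)/(2\dconst)\big)/\log(\df\icr)+1$, and read the result as a mixing-time--based floor on the work required rather than as a sufficient stopping rule. The $\max\{\cdot,1\}$ edge case you handle for large $\theta$ is fine under either reading.
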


\section{Modified Q-Value Iteration}\label{sec:modified_Q_itr}
In this section, we study the convergence properties of modified $Q-$value iteration.  First, define the following algorithm for modified Q-value iteration: 
\begin{quote} {\bf$\mathcal{WDQVF}$ Approximation Scheme} --- Given an initial value function estimate $V_0:\S\rightarrow\reals$, and a discounted factor $\df\in(0,1)$. Let $Q_0(x,a)$ be the following initial $Q-$function estimate:
\[
Q_0(x,a)=V_0(x),\,\,\forall (x,a)\in\S\times\A.
\]
For $k\in\{1,2,\ldots\}$, update the $(k+1)^{\text{th}}-$step $Q-$function estimate as follows:
\begin{equation}\label{Q_fn}
Q_{k+1}(x,a)=\frac{F^{k+1}[\Q_0](x,a)-\df F^k[\Q_0](x,a)}{1-\df}
\end{equation}
for any $(x,a)\in\S\times\A$.
\end{quote}

By using the error bound result for modified value iteration from the $\mathcal{WDVF}$ approximation, we can prove a similar error bound for $\mathcal{WDQVF}$ approximation. This result is summarized in the following theorem.
 \begin{theorem}
 Let  $\{\Q_k\}$ be a sequence of $\Q$-value function estimates generated by the $\mathcal{WDQVF}$ approximation scheme. Then, the following expression holds for any $(x,a)\in\S\times\A$:
\begin{equation*}
\left|\Q_{k}(x,a)-\Q^\star(x,a)\right|\leq 2\df\dconst\frac{(\df\icr)^{k-2}}{1-\df}=O((\df\icr)^k)
\end{equation*}
\end{theorem}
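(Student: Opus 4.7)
The plan is to reduce the $Q$-function iteration to the $V$-function iteration already analyzed in Theorem~\ref{lem_VI_dis}, by exploiting the special initialization $Q_0(x,a)=V_0(x)$ for all $a$. First, I would prove by induction on $k\geq 1$ the identity
\[
F^k[\Q_0](x,a)=\sum_{y\in\S}\P_a(x,y)\bigl(R_a(x,y)+\df\, T^{k-1}[\V_0](y)\bigr).
\]
The base case $k=1$ is immediate: since $\max_b Q_0(y,b)=V_0(y)$, applying the definition of $F$ gives the claim. For the inductive step, observe that taking $\max_b$ of the inductive hypothesis yields $\max_b F^k[\Q_0](y,b)=T^k[\V_0](y)$, and substituting this into $F^{k+1}[\Q_0]=F[F^k[\Q_0]]$ produces the advertised formula with $k$ replaced by $k+1$.

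Next, I would plug this identity into the $\mathcal{WDQVF}$ recursion (\ref{Q_fn}). The reward terms scale as $(1-\df)/(1-\df)=1$ and the remaining terms factor into a normalized one-step difference of $T^k[\V_0]$ and $T^{k-1}[\V_0]$, giving
\[
\Q_{k+1}(x,a)=\sum_{y\in\S}\P_a(x,y)\Bigl(R_a(x,y)+\df\,\V_k(y)\Bigr),
\]
where $\V_k$ is precisely the $\mathcal{WDVF}$ iterate defined in (\ref{val_fn}). In other words, the $\mathcal{WDQVF}$ iterate is a one-step discounted-Bellman backup applied to the $\mathcal{WDVF}$ iterate $\V_k$.

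With this reduction in hand, I subtract the $Q$-Bellman equation (\ref{Bellman_Q}) from the displayed expression for $\Q_{k+1}$: the reward and probability terms cancel, leaving
\[
\Q_{k+1}(x,a)-\Q^\star(x,a)=\df\sum_{y\in\S}\P_a(x,y)\bigl(\V_k(y)-\V^\star(y)\bigr),
\]
so $|\Q_{k+1}(x,a)-\Q^\star(x,a)|\leq \df\,\|\V_k-\V^\star\|_\infty$. Applying the sup-norm bound (\ref{V_bdd_2}) from Theorem~\ref{lem_VI_dis} (which gives $\|\V_k-\V^\star\|_\infty\leq 2\dconst(\df\icr)^{k-1}/(1-\df)$) and reindexing $k+1\mapsto k$ yields the stated inequality.

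The only genuinely delicate step is the first one: carrying the $\max_b$ through the $F$-iterates requires the initialization $Q_0(x,a)=V_0(x)$ to make $\max_b Q_0(y,b)=V_0(y)$, and from there the induction propagates cleanly because each application of $F$ contains a $\max_b$ that, together with the outer $\max_a$ of the next step, reproduces $T$. I do not foresee any other obstacle; once the $F^k$–$T^{k-1}$ identity is established, the remainder is a one-line contraction argument using the already-proved $\mathcal{WDVF}$ error bound.
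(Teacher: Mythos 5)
Your proposal is correct and arrives at exactly the stated bound. The high-level strategy is the same as the paper's --- both proofs hinge on the initialization $Q_0(x,a)=V_0(x)$ and the resulting identity $\max_{a\in\A}F^k[\Q_0](x,a)=T^k[\V_0](x)$, and both ultimately import the $\mathcal{WDVF}$ error analysis --- but your execution of the reduction is genuinely different and cleaner. The paper takes the sandwich inequality for $\V^\star(x)-\bigl(T^{k+1}[\V_0](x)-\df T^k[\V_0](x)\bigr)/(1-\df)$ (its expressions (\ref{rhs_h})--(\ref{lhs_h})), applies the operator $F$ to all three sides (which tacitly requires monotonicity of $F$ together with the shift property $F[\Q+c]=F[\Q]+\df c$), and then grinds through a computation of $F\bigl[\V^\star+(T^{k}[\V_0]-T^{k+1}[\V_0])/(1-\df)\bigr]$ to recognize $\Q_{k+2}$ inside. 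You instead establish the exact closed-form identity
\begin{equation*}
\Q_{k+1}(x,a)=\sum_{y\in\S}\P_a(x,y)\bigl(R_a(x,y)+\df\,\V_k(y)\bigr),
\end{equation*}
i.e.\ the $\mathcal{WDQVF}$ iterate is literally a one-step $Q$-backup of the $\mathcal{WDVF}$ iterate, after which subtracting the Bellman equation (\ref{Bellman_Q}) gives $|\Q_{k+1}(x,a)-\Q^\star(x,a)|\leq\df\|\V_k-\V^\star\|_\infty$ in one line; combined with (\ref{V_bdd_2}) and reindexing this is precisely the claimed $2\df\dconst(\df\icr)^{k-2}/(1-\df)$. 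What your route buys is the elimination of the ``apply $F$ to an inequality'' step (the only place where a subtle error could hide in the paper's argument) and an explicit structural statement relating the two schemes that the paper never writes down; what the paper's route buys is essentially nothing extra here, so your version is preferable. The one bookkeeping point worth making explicit is that your identity for $F^k[\Q_0]$ holds for $k\geq 1$, so the resulting bound applies to $\Q_k$ for $k\geq 2$, consistent with the exponent $k-2$ in the statement.
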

\begin{proof}
Based on the definitions of $T[\cdot]$ and $F[\cdot]$, we know that 
$\max_{a\in\A}F[\Q_0](x,a)=T[\V_0](x)$.
By repeating the above analysis, we can show by induction that 
$\max_{a\in\A}F^k[\Q_0](x,a)=T^k[\V_0](x),\,\, \forall k\in\naturals.$
We  will use the error bound result in the $\mathcal{WDVF}$ approximation scheme to show a similar error bound for the $\mathcal{WDQVF}$ approximation scheme. First, let 
\[
\overline{\Q}(x,a)=T^k[\V_0](x),\,\, \forall (x,a)\in\S\times\A.
\]
By applying $F[\cdot]$ to the above equation, it implies for any $(x,a)\in\S\times\A$,
\[
\small\begin{split}
&F[T^k[\V_0]](x,a)=F[\overline{Q}](x,a)\\
=&\sum_{y\in\S} P_a(x,y)\left(R_a(x,y)+\df\max_{b\in\A} \overline{Q}(y,b)\right)\\
=&\sum_{y\in\S} P_a(x,y)\left(R_a(x,y)+\df T^k[\V_0](y)\right)\\
=&\sum_{y\in\S}\! P_a(x,y)\!\left(\!\R_a(x,y)\!+\!\df\max_{b\in\A} F^k[\Q_0](y,b)\!\right)\!=\!F^{k+1}[\Q_0](x,a).
\end{split}
\]
Now, expression (\ref{rhs_h}) and (\ref{lhs_h}) imply
\begin{equation*}
\begin{split}
&-\frac{2\|h_{k}-h^\star\|_\infty}{1-\df}+T^k[\V_0](x) \leq \V^\star(x)\\
&-\frac{T^{k}[\V_0](x)- T^{k+1}[\V_0](x)}{1-\df}\leq \frac{2\|h_{k}-h^\star\|_\infty}{1-\df}+T^k[\V_0](x).
\end{split}
\end{equation*}

By applying $F[\cdot]$ to the above inequality, and noting that 
\[
F[\Q+c](x,a)=F[\Q](x,a)+\df c,
\]
we know that for any $(x,a)\in\S\times\A$,
\begin{equation}\label{Q_fact}
\begin{split}
&-\frac{2\df\|h_{k}-h^\star\|_\infty}{1-\df}+F^{k+1}[\Q_0](x,a) \\
\leq& F\left[\V^\star(x)+\frac{T^{k}[\V_0](x)- T^{k+1}[\V_0](x)}{1-\df}\right]\\
\leq &\frac{2\df\|h_{k}-h^\star\|_\infty}{1-\df}+F^{k+1}[\Q_0](x,a)
\end{split}
\end{equation}
Furthermore, by recalling $\max_{a\in\A}\Q^\star(x,a)=\V^\star(x)$, we obtain the following expressions:
\[\small
\begin{split}
 &F\left[\V^\star(x)+\frac{T^{k}[\V_0](x)- T^{k+1}[\V_0](x)}{1-\df}\right]\\
 =&\sum_{y\in\S} P_a(x,y)\bigg(R_a(x,y)+\df \max_{a\in\A} \bigg\{\V^\star(y)\\
 &\qquad\qquad\qquad+\frac{T^{k}[\V_0](y)- T^{k+1}[\V_0](y)}{1-\df}\bigg\}\bigg)\\
 =&\Q^\star(x,a)+\df\sum_{y\in\S} P_a(x,y)\frac{T^{k}[\V_0](y)- T^{k+1}[\V_0](y)}{1-\df}\\
 =&\frac{1}{1-\df}\left(\sum_{y\in\S} P_a(x,y)\left(R_a(x,y)+\df\max_{b\in\A}F^k[\Q_0](y,b)\right)\right.\\
 &\left.-\sum_{y\in\S} P_a(x,y)\!\!\left(\!\!R_a(x,y)\!+\!\df\max_{b\in\A}F^{k+1}[\Q_0](y,b)\right)\!\!\right)\!+\!\Q^\star(x,a)\\
 =&\Q^\star(x,a)+\frac{1}{1-\df}\left(F^{k+1}[\Q_0](x,a)-F^{k+2}[\Q_0](x,a)\right).
 \end{split}
\] 
Thus, by combining all arguments, expression (\ref{Q_fact}) implies 
\begin{equation*}
\begin{split}
&-\frac{2\df\|h_{k}-h^\star\|_\infty}{1-\df} \\
\leq& \Q^\star(x,a)-\left(\frac{F^{k+2}[\Q_0](x,a)}{1-\df}-\frac{\df}{1-\df}F^{k+1}[\Q_0](x,a)\right)\\
\leq &\frac{2\df\|h_{k}-h^\star\|_\infty}{1-\df}
\end{split}
\end{equation*}
Now, by putting the result: $\|h_{k}-h^\star\|_\infty\leq \dconst(\df\icr)^k$ to the above expression, the error bound proof for the $\mathcal{WDQVF}$ approximation scheme is completed.
\end{proof}
\section{Numerical Experiment}\label{sec:sim}
Consider $100$ Monte Carlo samples of randomly generated 100-state-6-action MDPs with $\S=\{1,2,\ldots,100\}$, $\A=\{1,2,3,4,5,6\}$, $\df=0.995$. The reward functions are randomly generated with $R_{\max}=1$. For simplicity each reward function is assumed to be $y-$independent, that is. $R_a(x,y) = R_a(x)$ along $y\in\S$. The transition probabilities induced by each actions are randomly generated with ergodic strength of at least $0.1$ ($\pb=0.1$ and $\dist=1$). This further implies the improved discount factor $\icr$ equals to $0.9$.\footnote{The explicit formulations of the reward functions and transition probabilities can be found in the author's website.} 
We want to compare the performance between the classical value iteration, Gauss-Seidel value iteration and the $\mathcal{WDVF}$ approximation scheme. Recall that the error bound for value iteration is given by ${R_{\max} \df^k}/{(1-\df)}$. From Theorem \ref{lem_VI_dis}, the error bound for $\mathcal{WDVF}$ approximation is given by ${2\dconst (\df\icr)^{k-1}}/{(1-\df)}$. From Proposition 6.3.8 in \cite{puterman_05}, the error bound of Gauss-Seidel value iteration is given by ${R_{\max}(\df\icr^\mathrm{GS})^k}/{(1-\df)}$, where $\icr^\mathrm{GS}<1$ can be calculated using the matrix regular splitting method depicted in Theorem 6.3.4 of \cite{puterman_05}.
\begin{figure}[htb]
\centering
\includegraphics[width=0.35\textwidth]{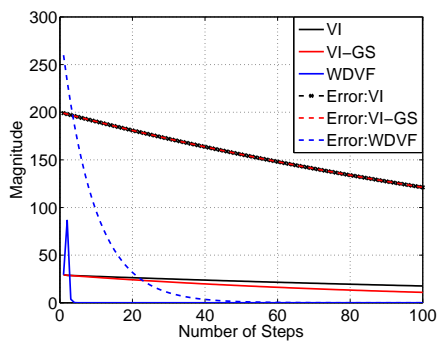}
\caption{Mean of $\|V_k-V^\star\|_\infty$ across Monte Carlo runs.}\label{fig_1}
\end{figure}

Figure \ref{fig_1} compares the error bound and speed of convergence of the $\mathcal{WDVF}$ approximation scheme, Gauss-Seidel value iteration and the classical value iteration. The stopping criterion of this experiment is: $\|V_k-V^\star\|_\infty\leq 10^{-5}$. On average, it is observed that $\mathcal{WDVF}$ approximation takes $92$ iterations (standard deviation: $13$ iterations) to converge, while Gauss-Seidel value iteration and classical value iteration take $2936$ iterations (standard deviation: $197$ iterations) and  $3551$ iterations (standard deviation: $172$ iterations) to converge respectively. As illustrated in Theorem \ref{lem_VI_dis}, the error bound of $\mathcal{WDVF}$ approximation is in the order of $(\df\icr)^k=0.8996^k$, while the error bound of the classical value iteration and Gauss-Seidel value iteration are in the order of $0.995^k$ (as $\df\icr^{\mathrm{GS}}\simeq \df$ numerically in our experiment). This numerical example demonstrates that, when $\df\rightarrow 1$, both classical value iteration and Gauss Seidel may encounter slow convergence issues, while the convergence for the $\mathcal{WDVF}$ approximation depends on $\icr$.

\section{Conclusions and Future Work}\label{sec:conc}
In this paper, we have proposed a novel weighted difference value function estimation scheme for discounted reward MDPs. We have shown that this approximation has an error bound of order $(\df\icr)^k$, $\icr\in(0,1)$, which decays faster than the error bound of classical value iteration  (in order of $\df^k$). We also characterize the improved convergence factor $\icr$ and the speed of convergence of this new approximation using $\epsilon-$mixing time. This characterization explicitly links the convergence speed of weighted difference value function estimation to the system behaviors of the MDP. Furthermore, we also extend the above method to find optimal $Q-$function. The above theoretical result is verified by a numerical experiment. Notice that while Assumption \ref{assume_ergodic} can be justified via Schweitzer's transformation \cite{cavazos1998note} in average reward MDPs, similar transformation does not work under the discounted reward settings. Eliminating the restrictions due to the ergodicity assumption will be left as future work.



\section*{Acknowledgement}
The authors would like to thank Professor Benjamin Van Roy for invaluable discussions.
\bibliographystyle{unsrt} 
\bibliography{ref_dyn_pro}

\addtolength{\abovedisplayskip}{-.02in}
\addtolength{\belowdisplayskip}{-.02in}
\appendix
\noindent{\bf Proof of Theorem~\ref{lem_VI_dis}.}
Let $\V^{(1)}$ and $\V^{(2)}$ be two arbitrary functions that maps $\S$ to $\reals$. We define $\V^{(1)}_{k}(x)=T^{k}[\V^{(1)}](x)$ and $\V^{(2)}_{k}(x)=T^{k}[\V^{(2)}](x)$ for any $x\in\S$ and for $k\in\{0,\ldots,\dist\}$. We also define two sequences of optimal policies, $\pi^{(j)}=\{\mu_0^{(j)},\mu_1^{(j)},\ldots,\}$, for $j\in\{1,2\}$, where
$\mu_k^{(j)}(x)\in\arg\max_{a\in\A}\sum_{y\in \S} \P_a(x,y)\left( R_a(x,y) + \df \V_k^{(j)}(y)\right).$ 
For any sequence of state feedback control policies $\pi=\{\mu_0,\mu_1,\ldots,\}$, define the following event: $\hist(x,\pi)=\{x_0=x,a_i=\mu_i(x_i),\,\forall i\}$, where $\{x_j\}_{j\in\mathbb Z^+}$ is a Markov chain induced by control policy $\pi$ with $x_0=x$. By substituting the sequences of optimal policies to value function $V^{(j)}_{\dist}$, one notices that for $j\in\{1,2\}$, and for any $x\in \S$,
\[
\begin{split}
\!\!V^{(j)}_{\dist}(x)\!\!=\!\!
\expectation{\sum_{i=0}^{\dist\!-1}\!\!\df^i R_{a_i}\!(x_i,x_{i+1})\!\!+\!\!\df^{\dist}\! \V^{(j)}(x_{\dist}\!)\!\!\mid\!\! \hist(x,\pi^{(j)})\!}.
\end{split}
\]
By considering the difference between $\V^{(1)}_{\dist}(x)$ and $\V^{(2)}_{\dist}(x)$, we get
\[
\begin{split}
&\V^{(1)}_{\dist}(x)-\V^{(2)}_{\dist}(x)\\
&= \expectation{\sum_{i=0}^{\dist-1}\!\!\df^i R_{a_i}(x_i,x_{i+1})\!+\!\df^{\dist} \V^{(1)}(x_{\dist})\mid \hist(x,\pi^{(1)})}\\
&- \expectation{\sum_{i=0}^{\dist-1}\!\!\df^i R_{a_i}(x_i,x_{i+1})\!+\!\df^{\dist} \V^{(2)}(x_{\dist})\mid \hist(x,\pi^{(2)})}\\
&\geq  \expectation{\sum_{i=0}^{\dist-1}\!\!\df^i R_{a_i}(x_i,x_{i+1})\!+\!\df^{\dist} \V^{(1)}(x_{\dist})\mid \hist(x,\pi^{(2)})}\\
&- \expectation{\sum_{i=0}^{\dist-1}\!\!\df^i R_{a_i}(x_i,x_{i+1})\!+\!\df^{\dist} \V^{(2)}(x_{\dist})\mid \hist(x,\pi^{(2)})}\\
&=\expectation{\df^{\dist} (\V^{(1)}(x_{\dist})-\V^{(2)}(x_{\dist}))\mid \hist(x,\pi^{(2)})}.
\end{split}
\]
The first inequality is due to the fact that for any $k\in\mathbb Z^+$, $\mu_k^{(2)}(x)$ is a feasible solution to the optimization problem
$\max_{a\in\A}\sum_{y\in \S} \P_a(x,y)\left( R_a(x,y) + \df \V_k^{(1)}(y)\right),$
where $\mu_k^{(1)}(x)$ is an optimal solution of this problem, for every $x\in\S$. By Assumption \ref{assume_ergodic}, this further implies that
\[
\begin{split}
&\big(\V^{(1)}_{\dist}(x)-\V^{(2)}_{\dist}(x)\big)/{\df^{\dist}}\\
\geq&\sum_{y\in S}\probj_{\pi^{(2)}}(x_{0}=x,x_{\dist}=y)(\V^{(1)}(y)-\V^{(2)}(y))\\
\geq&[(1-\pb)\!\min_{y\in \S}\{\V^{(1)}(y)\!-\!\V^{(2)}(y)\}\!+\!\pb(\V^{(1)}(\y)\!-\!\V^{(2)}(\y))],
\end{split}
\]
where $\y\in\S$ is the state defined in Assumption \ref{assume_ergodic}.
Similarly, by a symmetric argument, we can also prove that
\[
\begin{split}
&\frac{1}{\df^{\dist}}\max_{y\in\S}\left\{T^{\dist}[\V^{(1)}](y)-T^{\dist}[\V^{(2)}](y)\right\}\\
\leq&[(1-\pb)\!\max_{y\in \S}\{\V^{(1)}(y)\!-\!\V^{(2)}(y)\}\!+\!\pb(\V^{(1)}(\y)\!-\!\V^{(2)}(\y))].
\end{split}
\]
Thus, by these inequalities and the definitions of $\|T^{\dist}[\V^{(1)}]-T^{\dist}[\V^{(2)}]\|_d$, $\|\V^{(1)}-\V^{(2)}\|_d$, we can show that the following $\dist-$step contraction property holds:
\[
\|T^{\dist}[\V^{(1)}]-T^{\dist}[\V^{(2)}]\|_d\leq(\df\icr)^{\dist}\|\V^{(1)}-\V^{(2)}\|_d.
\]

By mathematical induction and the definitions of $\lambda^\star$, $h^\star$, it can be easily shown that
\begin{equation}\label{MI_bellman}
 \sum_{i=0}^{k-1}\df^i\lambda^\star+h^\star(x)=T^k[h^\star](x), \quad \forall x\in\S.
 \end{equation}
Consider the expression: $\left\|T^k[\V_0]-\sum_{i=0}^{k-1}\df^i\lambda^\star-h^\star\right\|_d$. By writing $k=q\dist+\ell$, $\ell=\{0,1,\ldots,\dist-1\}$, where the nonnegative integer $q$ is the greatest common divisor of $k$ and $\dist$, from expression (\ref{MI_bellman}), we obtain the following relationship:
\begin{equation}
\begin{split}
&\big\|T^k[\V_0]-\sum_{i=0}^{k-1}\df^i\lambda^\star-h^\star\big\|_d=\|T^k[\V_0]-T^k[h^\star]\|_d\\
&\leq(\df\icr)^{q\dist}\|T^\ell[\V_0]-T^\ell[h^\star]\|_d\leq \dconst(\df\icr)^k.\label{alpha_beta_con}
\end{split}
\end{equation}
Note that $T^k[h^\star](x)=T^k[\V^\star](x)-\df^k\V^\star(z)=\V^\star(x)-\df^k\V^\star(z)$. From Section 6.6.1 in \cite{puterman_05}, one also obtains $\|u+v\|_d\leq \|u\|_d+\|v\|_d$, $\|-u\|_d=\|u\|_d$, $\|ku\|_d= |k|\|u\|_d$ and $\|u+k\|_d=\|u\|_d$ for any scalar $k$. Therefore, the above expression implies
$
\|T^k[\V_0]-T^k[h^\star]\|_d=\|T^k[\V_0]-\V^\star\|_d\leq \dconst(\df\icr)^k
$
and
\[
\begin{split}
&\|\V_{ k+1}-\V^\star\|_d=\left\|\frac{( T^{ k+1}[\V_0]-\V^\star)-\df( T^{ k}[\V_0]-\V^\star)}{1-\df}\right\|_d\\
\leq &  \left(\| T^{ k+1}[\V_0]-\V^\star\|_d+\df\| T^{ k}[\V_0]-\V^\star\|_d \right)/({1-\df})\\
\leq &(\df\icr)^{ k}\left((\df\icr)+\df\right)\dconst/({1-\df} ).
\end{split}
\]
This implies that the error bound in expression (\ref{V_bdd_1}) holds.

Next, we will show the error bound in expression (\ref{V_bdd_2}).
Define the following quantities that estimate the gain and bias in the $k^{\text{th}}$ step:
\begin{equation*}
\begin{split}
h_k(x)=&T^k[\V_0](x)-T^k[\V_0](z),\\
\lambda_k(x)=&T^k[\V_0](x)-T^{k-1}[\V_0](x)+(1-\df)T^{k-1}[\V_0](z)
\end{split}
\end{equation*}
where $z\in\S$ is an arbitrary reference state.
By simple calculations, the above expressions imply 
$\lambda_{k+1}(x)+h_{k}(x)=T[h_{k}](x).$
It can be easily seen that $h^\star(z)=\V^\star(z)-\V^\star(z)=0$ and
\[
\begin{split}
&|h_k(x)-h^\star(x)|=|T^k[\V_0](x)-T^k[\V_0](z)-h^\star(x)+h^\star(z)|\\
&\!=\!\big|\!T^k[\V_0](x)\!\!-\!\!\sum_{i=0}^{k-1}\!\df^i\lambda^\star\!\!-\!\!h^\star(x)\!\!\!-\!\!\big[T^k[\V_0](z)\!\!-\!\!\sum_{i=0}^{k-1}\!\df^i\lambda^\star\!\!-\!\!h^\star(z)\big]\!\big|\\
&\leq\big\|T^k[\V_0]-\sum_{i=0}^{k-1}\df^i\lambda^\star-h^\star\big\|_{d}\leq \dconst(\df\icr)^k.
\end{split}
\]
Thus, the above inequality implies
\[
\|h^\star-h_k\|_\infty=\max_{x\in\S}|h^\star(x)-h_k(x)|\leq \dconst(\df\icr)^k.
\]
Next, we know from the contraction property of $T[\cdot]$ that
\[
\begin{split}
&\lambda_{k+1}(x)+h_{k}(x)-(\lambda^\star+h^\star(x))=T[h_{k}](x)-T[h^\star](x)\\
\leq&\max_{b\in \A}\df\sum_{y\in \S} P_b(x,y)|h_{k}(y)-h^\star(y)|\leq \df\|h_{k}-h^\star\|_\infty.
\end{split}
\]
By using the definitions of $\lambda_{k+1}(x)$, $h_{k}(x)$, $\lambda^\star$ and $h^\star(x)$, the above expression implies
\[
\begin{split}
T^{k+1}[\V_0](x)-&T^k[\V_0](x)+(1-\df)(T^k[\V_0](z)-\V^\star(z))\\
&+h_{k}(x)-h^\star(x)\leq \df\|h_{k}-h^\star\|_\infty,
\end{split}
\]
which further implies
\[
\begin{split}
&T^{k+1}[\V_0](x)-T^k[\V_0](x)+(1-\df)(T^k[\V_0](z)-\V^\star(z))\\
\leq& (1+\df)\|h_{k}-h^\star\|_\infty.
\end{split}
\]
By inserting 
\[
T^k[\V_0](z)-\V^\star(z)=T^k[\V_0](x)-h_{k}(x)-(\V^\star(x)-h^\star(x))
\]
to the above expression, we get
\[
\begin{split}
T^{k+1}[\V_0]&(x)-T^k[\V_0](x)+(1-\df)(T^k[\V_0](x)-\V^\star(x))\\
&-(1-\df)(h_{k}(x)-h^\star(x))\leq (1+\df)\|h_{k}-h^\star\|_\infty.
\end{split}
\]
This implies
\[
\begin{split}
&T^{k+1}[\V_0](x)-T^k[\V_0](x)+(1-\df)(T^k[\V_0](x)-\V^\star(x))\\
\leq& (1+\df)\|h_{k}-h^\star\|_\infty+(1-\df)(h_{k}(x)-h^\star(x))\\
\leq& 2\|h_{k}-h^\star\|_\infty.
\end{split}
\]
By combining all inequalities, we get
\begin{equation}
\!\!\!\!\!\!\frac{T^{k+1}[\V_0](x)-\df T^k[\V_0](x)}{1-\df}-\V^\star(x)\leq \frac{2\|h_{k}-h^\star\|_\infty}{1-\df}.\label{rhs_h}\!\!
\end{equation}
Similarly, by noting that
\[
\begin{split}
&T[h_{k}](x)-T[h^\star](x)\\
\geq&-\df\max_{b\in \A}\sum_{y\in \S} P_a(x,y)|h_{k}(y)-h^\star(y)|\geq -\df\|h_{k}-h^\star\|_\infty
\end{split}
\]
and applying analogous arguments as in the derivation of inequality (\ref{rhs_h}), we get
\begin{equation}
\!\!\!\!\frac{T^{k+1}[\V_0](x)\!-\!\df T^k[\V_0](x)}{1-\df}\!-\!\V^\star(x)\!\geq \!-\!2\frac{\|h_{k}\!-\!h^\star\|_\infty}{1-\df}.\label{lhs_h}\!\!
\end{equation}
Now, since 
$\|h_{k}-h^\star\|_\infty\leq \dconst(\df\icr)^k,$
the definition of $V_{k+1}(x)$, expression (\ref{rhs_h}) and (\ref{lhs_h}) imply expression (\ref{V_bdd_2}) holds for all $x\in\S$. This completes the first part of the proof.

Finally, we will show expression (\ref{expression_difference_V}) holds. For any $k\in\mathbb Z^+$, the $\mathcal{WDVF}$ approximation can be re-written as
\[
\begin{split}
&\V_{k+1}(x)=\frac{T^{k+1}[\V_0](x)-T^{k}[\V_0](x)}{1-\df}+T^k[\V_0](x).
\end{split}
\]
Thus, we know that
\[\small
\begin{split}
\V_{k+1}&(x)-\V_k(x)=\frac{T^{k+1}[\V_0](x)+T^{k-1}[\V_0](x)-2T^{k}[\V_0](x)}{1-\df}\\
&\qquad\qquad\qquad+T^k[\V_0](x)-T^{k-1}[\V_0](x)\\
\leq&\frac{\df(\|T^k[\V_0]-T^{k-1}[\V_0]\|_\infty+T^{k-1}[\V_0](x)-T^{k}[\V_0](x))}{1-\df}.
\end{split}
\]
The first inequality is implied by the fact that $T[\cdot]$ is a $\df-$contraction mapping:
\[
T^{k+1}[\V_0](x)-T^{k}[\V_0](x)\!\leq \!\df\|T^{k}[\V_0]-T^{k-1}[\V_0]\|_\infty,\,\, \forall x\in\S.
\]
On the other hand, we can also show that
\[
\begin{split}
&\V_{k+1}(x)-\V_k(x)\\
\geq&\frac{\df(-\|T^k[\V_0]-T^{k-1}[\V_0]\|_\infty+T^{k-1}[\V_0](x)-T^{k}[\V_0](x))}{1-\df}.
\end{split}
\]
by analogous arguments. This completes the second part of the proof.

\end{document}